\documentclass[10pt,twoside]{article}

%%%%%%%%%%%%%%%%%%%%%%%%%%%%%%%%%%%%%%%%%%%%%%%%%%%%%%%%%
%%%% ---- import 7OSME Formatting - DO NOT EDIT ---- %%%%
%%%%%%%%%%%%%%%%%%%%%%%%%%%%%%%%%%%%%%%%%%%%%%%%%%%%%%%%%

\usepackage{7OSME-style}

%%%%%%%%%%%%%%%%%%%%%%%%%%%%%%%%%%%%%%%%%%%%%%%%%%%%%%%%
%%%% ----        start user formatting         ---- %%%%
%%%%%%%%%%%%%%%%%%%%%%%%%%%%%%%%%%%%%%%%%%%%%%%%%%%%%%%%

% additional packages may be loaded for customised layout, 
% but aim to minimise these to avoid clashes with packages 
% used in the default layout. The preloaded packages can
% be found in the 7OSME-style.sty file.

\usepackage{xfrac}

\fancyhead[RE]{\footnotesize \textsf{\textsc{\runauthor}}}
\fancyhead[LO]{\footnotesize \textsf{\textsc{Knot Embeddings in Improper Foldings}}}

\newtheorem{conjecture}{Conjecture}
\newtheorem{lemma}{Lemma}
\theoremstyle{definition}
\newtheorem{definition}{Definition}
\newtheorem*{definition*}{Definition}
\newtheorem{openq}{Open Question}

\newcommand{\im}{\mathrm{Im}}

%%%%%%%%%%%%%%%%%%%%%%%%%%%%%%%%%%%%%%%%%%%%%%%%%%%%%%%%
%%%% ----         end user formatting          ---- %%%%
%%%%%%%%%%%%%%%%%%%%%%%%%%%%%%%%%%%%%%%%%%%%%%%%%%%%%%%%

%%%%%%%%%%%%%%%%%%%%%%%%%%%%%%%%%%%%%%%%%%%%%%%%%%%%%%%%
%%%% ----   Document and Author Information    ---- %%%%
%%%%%%%%%%%%%%%%%%%%%%%%%%%%%%%%%%%%%%%%%%%%%%%%%%%%%%%%

\title{Knot Embeddings in Improper Foldings\footnote{
\emph{Note for arXiv verison:} This paper originally appeared in the 7OSME proceedings and has since received a few very minor corrections. Please cite as below, optionally including the arXiv identifier.\protect\\\protect\\{\hangindent=0.4cm\hangafter=2\textsf{\scriptsize Slote, J., \& Bertschinger, T. (2018). Knot Embeddings in Improper Foldings. In R. J. Lang, M. Bolitho, \& Z. You (Eds.), \emph{Origami\textsuperscript{7}: the proceedings from the 7\textsuperscript{th} International Meeting on Origami in Science, Mathematics, and Education (7OSME)} (Vol. 2: Mathematics, pp. 451–464). St Albans, United Kingdom: Tarquin.}
}}}
\authorlist[Slote, Bertschinger]{Joseph Slote, Thomas Bertschinger} % the text in square brackets will be shown in the headers; this should be the author surnames
\affiliations{Joseph Slote 
\at California Institute of Technology, 1200 E. California Blvd., Pasadena, CA 91125,
\email{jslote@caltech.edu}
\vspace{-1.3em}
{\and\footnotesize  \at (Previously Carleton College, 1 North College Street,
Northfield, MN 55057, \email{joseph.slote@gmail.com})}\\
\and Thomas Bertschinger \at Carleton College,
1 North College Street,
Northfield, MN 55057, \email{tahbertschinger@gmail.com}}

% place title/authors in header
\makeatletter
  \let\runtitle\@title
  \let\runauthor\shortauthor
\makeatother

%%%%%%%%%%%%%%%%%%%%%%%%%%%%%%%%%%%%%%%%%%%%%%%%%%%%%%
%%%% ----          start document            ---- %%%%
%%%%%%%%%%%%%%%%%%%%%%%%%%%%%%%%%%%%%%%%%%%%%%%%%%%%%%

\begin{document}
\maketitle

\begin{abstract}
Simple closed curves in the plane can be mapped to nontrivial knots under the action of origami foldings that allow the paper to self-intersect.
We show all tame knot types may be produced in this manner, motivating the development of a new knot invariant, the \emph{fold number}, defined as the minimum number of creases required to obtain an equivalent knot.
We study this invariant, presenting a bound on the fold number by the diagram stick number as well as a class of torus knots with constant fold number.
We also pursue a characterization of those foldings which admit nontrivial knots, giving a proof that no ``physically realizable'', or \emph{proper}, foldings can admit nontrivial knots.
A number of questions are posed for further study.
\end{abstract}

\section{Introduction}
\label{sec:introduction}

It is not immediately clear that a connection ought to exist between the mathematics of origami and knot theory: folding in real life is, after all, the manipulation of paper by isotopies.
Yet the wealth and variety of existing knot invariants along with the combinatorial nature of origami crease patterns indicate that, if a connection could be made, the structure of crease patterns may provide an interesting characterization of the complexity of knots.
It is our aim in this work to show that such a connection is not only possible but maybe even natural---and that it opens a new avenue of interesting study for topologists and mathematical origamists alike.
This connection was first suspected by Jacques Justin in \cite{Justin1997TowardOrigami}.
We will begin by presenting a definition of origami folding which admits self-intersection, proceed to exploring the knots thereby created, and conclude with an investigation into which maps admit knots.

A variety of mathematical formalisms for origami folding have appeared in the literature.
Perhaps the earliest are the Huzita-Hatori or Huzita-Justin Axioms for flat-folding (e.g., \cite{Alperin2006One-Axioms}, \cite{Justin1991ResolutionGeometriques}), which describe the possible alignments which can practically be used to define creases in two-dimensional origami.
While these are important definitions for understanding constructibility and flat-folded origami, they serve a different purpose than the topological definition we will use here.
In particular, we will not be concerned with ``constructability'' of crease patterns related to our foldings, nor are we interested in any sort of folding sequence or even the rigid-foldability of the maps (though these would be worthwhile extensions to the topic).

Instead our work will focus on the topological properties of a folding as described by a map from the unit square to the ambient space of $\mathbb R^3$.
This definition bears greater similarity to that used in the computational geometry work of Demaine, O'Rourke, and others (e.g., \cite{Demaine2007GeometricPolyhedra}) but includes the additional allowance that the paper may self-intersect.

\begin{definition}
An \emph{origami folding} is a piecewise-linear arcwise isometry $[0,1]^2\to \mathbb{R}^3$.
\end{definition}

Let $F$ be an origami folding.
Here by ``piecewise-linear'' we mean that $\im(F)$ is a union of polygons in $\mathbb{R}^3$.
Piecewise-linearity entails there there is a graph embedding $G\subset [0,1]^2$ such that $F$ is non-differentiable precisely on $G$.
We refer to $G$ as the \emph{crease pattern} for $F$.
By ``arcwise isometry'' we mean that if $p:[0,1]\to [0,1]^2$ is a path in the unit square then its Euclidean arc length, $L(p)$, is preserved by the folding: $L(p) = L(F(p))$ (e.g., \cite{Burago2001AGeometry}).
Note that in what follows we will frequently use that fact that arcwise isometry is equivalent to piecewise isometry for piecewise-linear maps.

According to this definition $F$ may crease the paper and create self intersection, but $F$ may not stretch, shrink, or change the curvature of the paper.
The unit square was chosen as the domain of foldings in the spirit of origami, but other domain surfaces (orientable or not) may be worthy of further study.
Note that in some of the constructions below we use rectangles for the folding domain in the name of visual clarity.
This is without loss of generality, however, as in all cases a square may be used instead by folding away the extra paper or scaling the construction appropriately.

\begin{definition}
A knot $K$ is an \emph{origami knot} if there exists a piecewise-linear loop $\ell: S^1\hookrightarrow [0,1]^2$ on the origami paper and an origami folding $F$ such that $F(\ell) = K$.
When this property is satisfied, we say \emph{F} \emph{admits} $K$.
\end{definition}

As the piecewise-linear restriction suggests, we are concerned only with tame knots here and hence by ``knot'' we will mean ``tame knot''.
Further, in a number of diagrams the loop $\ell$ will appear to be drawn with smoothly curving sections;
this is again for the sake of visual clarity.
The cautious reader may imagine $\ell$ to be made of piecewise-linear segments too short to be resolved by the human eye.
See Figure \ref{trefoil-example} for a construction of a trefoil as an origami knot.

\begin{figure}
\centering
\includegraphics[width=.8\textwidth]{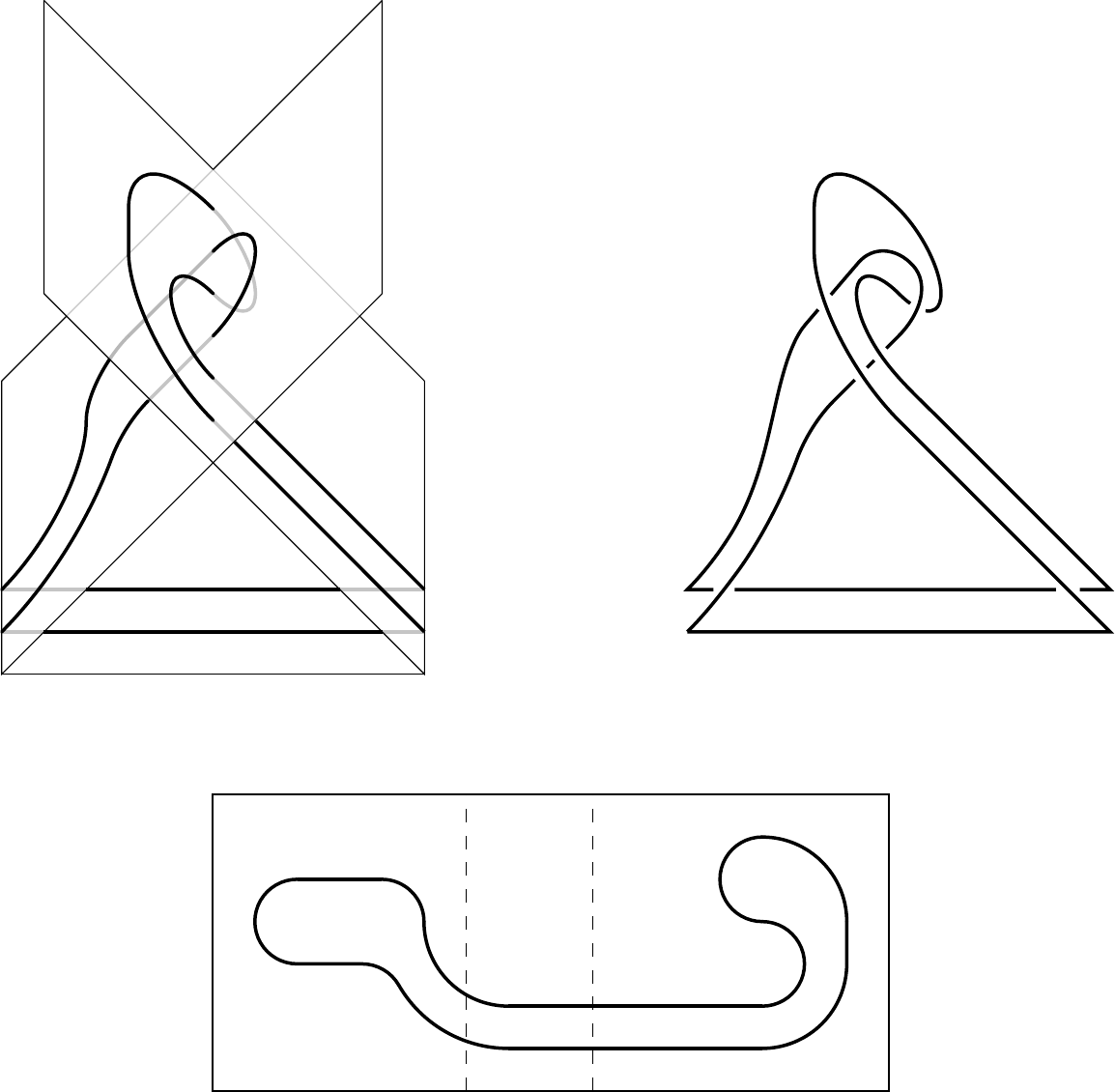}
\caption{(Left) A folding with an embedded loop; (Right) The loop is mapped to the trefoil under the folding; (Bottom) The crease pattern for this folding.}
\label{trefoil-example}
\end{figure}

\begin{definition}
The \emph{fold number} of a knot $K$ is the minimum number of edges in a crease pattern of an origami folding which admits an equivalent knot $K'$.
\end{definition}

We now explore this invariant:
how is the fold number related to other knot invariants?
What is the fold number of various families of knots?
Can we characterize the set of foldings which admit knots?
What is its structure?

\section{The Fold Number}
\label{sec:section}

Recall the \emph{diagram stick number} of a knot $K$ is the minimum number of line segments required to create a piecewise-linear knot diagram for $K$.

\begin{theorem}
\label{thm:allknots}
All knots are equivalent to an origami knot.
Moreover, the diagram stick number of a knot is an upper bound on the fold number.
\end{theorem}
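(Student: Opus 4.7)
Since every tame knot has a finite diagram stick number, the first assertion follows from the second; the focus is thus on the bound. Let $n$ be the diagram stick number of $K$, realized by a stick diagram $D$.

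The plan is to construct an origami folding $F\colon[0,1]^2\to\mathbb{R}^3$ with exactly $n$ crease edges, together with a loop $\ell$ whose image $F(\ell)$ is a polygonal knot ambient isotopic to $K$ with $n$ sticks. The construction proceeds in three parts. First, lift $D$ to a 3D polygonal knot $K'\subset\mathbb{R}^3$ with $n$ straight segments whose vertical projection realizes $D$ with the correct over/under data; this $K'$ is ambient isotopic to $K$. Second, thicken $K'$ into a piecewise-planar \emph{ribbon} $\Sigma\subset\mathbb{R}^3$ of small width $\epsilon$, consisting of a flat strip lying in a plane containing each stick, joined by a single straight fold at each of the $n$ vertices of $K'$. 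Since $\Sigma$ is piecewise-flat, it is the isometric image of a planar polygonal region $R_0$ with exactly $n$ straight creases, and the centerline of $R_0$ maps to $K'$. For $\epsilon$ sufficiently small, the preimages of the bends in $R_0$ do not overlap.

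Third, embed $R_0$ into $[0,1]^2$ (rescaling as permitted by the remark in the introduction) and extend each of the $n$ creases in $R_0$ to the boundary of the paper, routing the extensions so that they do not mutually intersect in their interiors; this keeps the crease-edge count at exactly $n$. Define $F$ on each complementary polygonal face by the unique rigid isometry that matches the fold angle of $\Sigma$ along the shared crease. Finally, take $\ell$ to be the centerline of $R_0$, closed up by a short arc through the exterior of $R_0$; because $\Sigma$ closes around $K'$, the two endpoints of the centerline map to the same point in $\mathbb{R}^3$, and the closing arc maps to a short arc which, for $\epsilon$ small, is isotopic rel endpoints to a point in the complement of $K'$. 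Hence $F(\ell)$ is ambient isotopic to $K'$, and therefore to $K$.

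The main obstacle lies in the third part: embedding $R_0$ so that the $n$ creases can be extended without mutual intersection (else new graph vertices would split edges and inflate the crease count), and verifying that the folding is consistently defined on the exterior of $R_0$. The non-intersection is achievable by shrinking $R_0$ so that it sits in a small region of $[0,1]^2$, leaving the extensions ample room to radiate outward without crossing. The consistency of the exterior folding follows because the fold angle along each extended crease is uniquely specified by $\Sigma$, and each complementary face shares a boundary crease with exactly one piece of $R_0$, so its rigid placement in $\mathbb{R}^3$ is forced once that neighbor is fixed.
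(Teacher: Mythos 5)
Your overall strategy---develop a thin ribbon neighborhood of a spatial polygonal realization of $K$ into the plane---is genuinely different from the paper's construction, but as written it has a gap that I do not think can be patched without changing the approach. The paper instead cones the stick diagram from a single apex $q$, lifts $q$ vertically until the angles $\theta_i$ at $q$ sum to exactly $2\pi$, and then develops that cone into the square about a central point $q'$; all $n$ creases are then rays emanating from $q'$, so they meet only at $q'$, the angle condition guarantees the development closes up consistently, and the loop $\ell$ is a genuinely closed curve in the paper whose image is the whole diagram.

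The gap in your version is the third step. For a piecewise-linear arcwise isometry, the creases are straight segments, and a crease with nonzero fold angle cannot simply terminate at an interior point of flat paper: it must either reach the boundary or meet other creases at a vertex. So each of the $n$ short bend-creases of $R_0$ must be extended \emph{along its own line}---you cannot ``route'' the extensions---and the $n$ lines through the bends of a zigzagging strip will generically intersect one another inside the square no matter how small you shrink $R_0$ (shrinking moves the crossings, it does not remove them). Each crossing both inflates the edge count of the crease pattern beyond $n$ and creates a degree-four interior vertex at which the prescribed fold angles generically admit no consistent isometric folded state; relatedly, your claim that each complementary face ``shares a boundary crease with exactly one piece of $R_0$'' is not established, and when a face has two already-placed neighbors its rigid placement is over-determined. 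A second, smaller gap: the two ends of the developed strip $R_0$ lie far apart in the plane, so the ``short arc'' closing up the centerline has substantial length; its image under the isometry $F$ is a loop of that same length based at a point of $K'$, and nothing in your argument prevents that loop from tangling with $K'$, so $F(\ell)\simeq K'$ is not justified. The paper's single-apex cone is precisely the device that makes both problems disappear.
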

\begin{proof}
Let $s$ be a minimal stick diagram (with crossing information omitted) for a given knot $K$ and let $n$ be the stick number of $K$.
We will construct an origami folding $F$ and a loop $\ell$ such that $F(\ell) = s$ (sans crossing information), after which small perturbations of $\ell$ yield the desired over- and under-crossings.

Begin by assigning an orientation to $s$.
Note that $s$ is piecewise-linear, so we may treat it as a directed graph for the purpose of labeling its edges and vertices.
Label its edges $e_1, e_2, \ldots, e_n$ such that $e_n$ is adjacent to $e_1$ and assign to each vertex $v$ the index of its incoming edge.
Finally, position a point $q$ in a region of the $xy$-plane bounded by $s$.
An example is pictured in Figure \ref{labeling-shadow}.

\begin{figure}
\centering
\includegraphics[width=.9\textwidth]{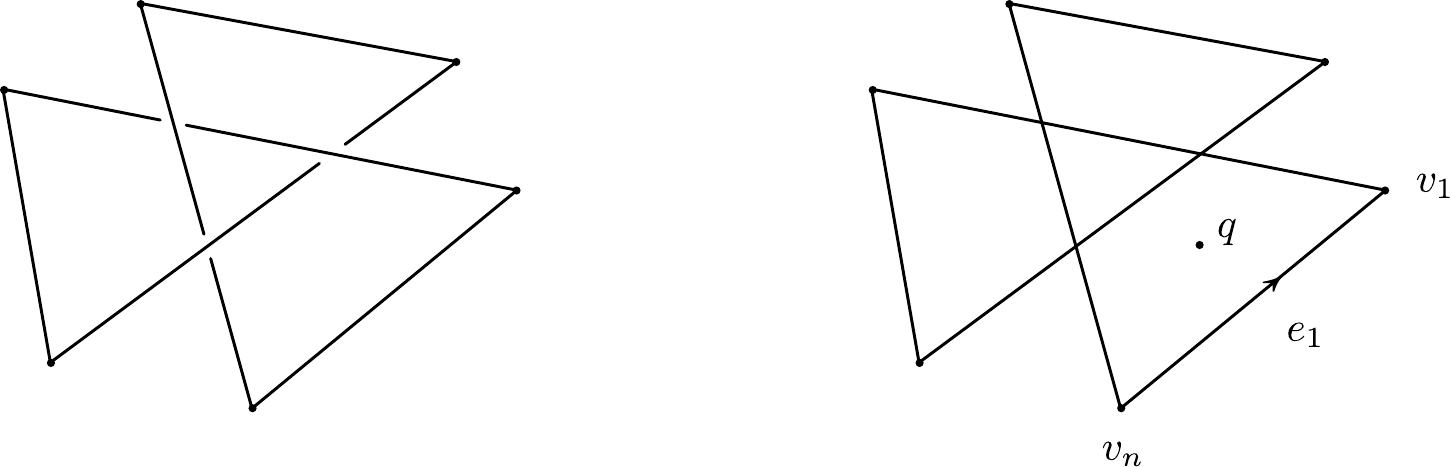}
\caption{(Left) A minimal stick diagram $s$ for the trefoil; (Right) The diagram $s$ with crossing information omitted, along with an orientation and a labeling (some labels hidden for clarity).}
\label{labeling-shadow}
\end{figure}

Now for $1\leq i\leq n$, define $p_i$ as the line segment with endpoints $v_i$ and $q$.
Let $\theta_i$ denote the angle between $p_i$ and $p_{i-1}$ (or, in the case that $i=1$, between $p_1$ and $p_n$), as displayed in Figure \ref{finding-q}, left.

We claim there exists a transformation of $q$ along the $z$-axis such that 
\begin{equation}
\sum_{i=1}^n \theta_i = 2\pi. \label{eq:angle-eq}
\end{equation}
First observe that $\sum_{i=1}^n \theta_i \geq 2\pi$.
This is because $q$ is located within in a region bounded by $s$ and the angles from edges involved in bounding this region alone must sweep out at least $2\pi$ radians. 
If $\sum_{i=1}^n \theta_i = 2\pi$ the claim is proved;
otherwise $\sum_{i=1}^n \theta_i > 2\pi$.
In this case observe that as $q$ is shifted along the $z$-axis to $z=\infty$, each $\theta_i$ becomes arbitrarily small.
This narrowing of the angles is continuous, so by the intermediate value theorem there exists some $z$ value for which $q$ satisfies \eqref{eq:angle-eq}.
We refer to the configuration of line segments with $q$ satisfying \eqref{eq:angle-eq} as the construction $Q$.

\begin{figure}
\centering
\includegraphics[width=.9\textwidth]{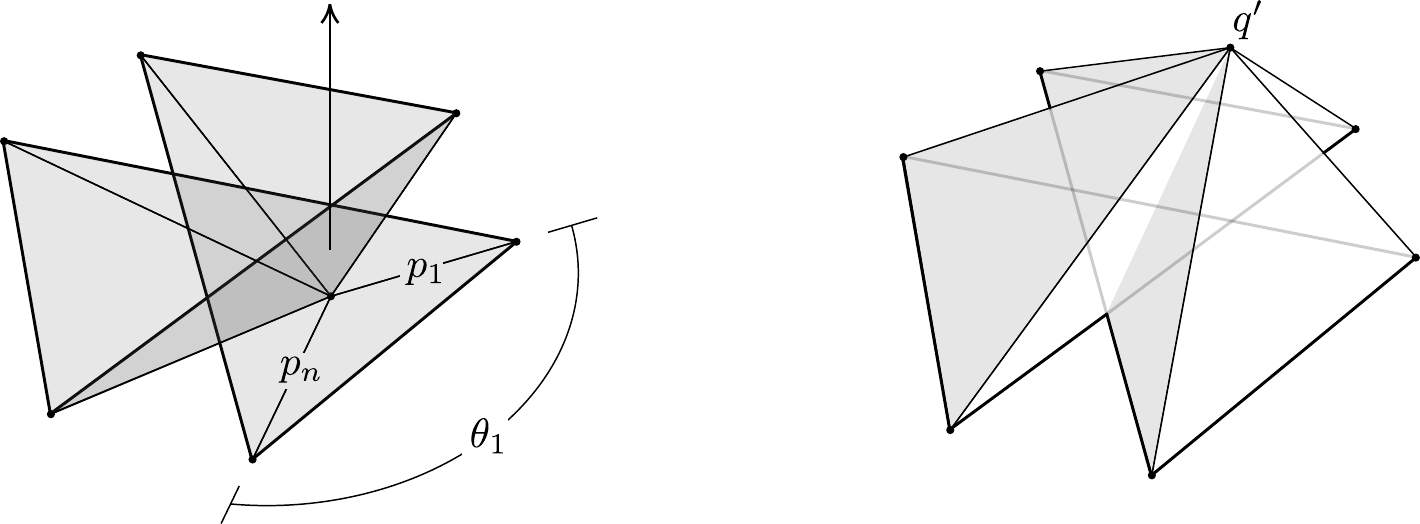}
\caption{(Left) A partial example of segments $p_i$ and the $\theta_i$ between them. Point $q$ is about to be shifted along the $z$-axis; (Right) The appropriate $z$ values for $q$ has been determined such that $\sum \theta_i = 2\pi$.}
\label{finding-q}
\end{figure}

We are now prepared to define $F$ and $\ell$, beginning with a construction of the crease pattern for $F$.
Starting with the square $[0,1]^2$, place a point $q'$ at $(\sfrac{1}{2},\sfrac{1}{2})$.
Now, moving in a circle about $q'$, place points $\{v'_i\}_{i=1}^n$ such that the triangles $q'v'_iv'_{i-1}$ are congruent to the triangles $qv_iv_{i-1}$ in $Q$.
Note that this is possible because \eqref{eq:angle-eq} is satisfied.
(If this requires any of the $v'_i$ to be outside of the paper, rescale the original knot diagram so this is no longer the case.)
Now extend the line segments $\{\overline{q'v'_i}\}_{i=1}^n$ to the edge of the paper.
Let these define the crease pattern for $F$ (see Figure \ref{final_isometry}, left , for an example).

To construct $F$ itself, begin by mapping each triangle $q'v'_iv'_{i-1}$ in the square to its counterpart $qv_iv_{i-1}$ in construction $Q$.
The mapping of each triangle is a rigid transformation in $\mathbb{R}^3$ and adjacencies between triangles are also preserved.
Hence this mapping of triangles is an arcwise isometry.
$F$ may then be linearly extended to the remaining regions of the square as depicted in Figure \ref{final_isometry}, right.

Letting $\ell$ be a loop with the image $\bigcup_{i=1}^n e_i$, we have produced an $F$ and $\ell$ such that $F(\ell) = s$.
We may then replace $\epsilon$-small regions of the $e_i$s with piecewise-linear approximations of semicircles to recover the over- and under-crossings of the original knot $K$, as demonstrated in Figure \ref{final_isometry}, right.
To complete the proof, observe that there are $n$ creases so the fold number of a knot is at most its diagram stick number.
\begin{figure}
\centering
\includegraphics[width=\textwidth]{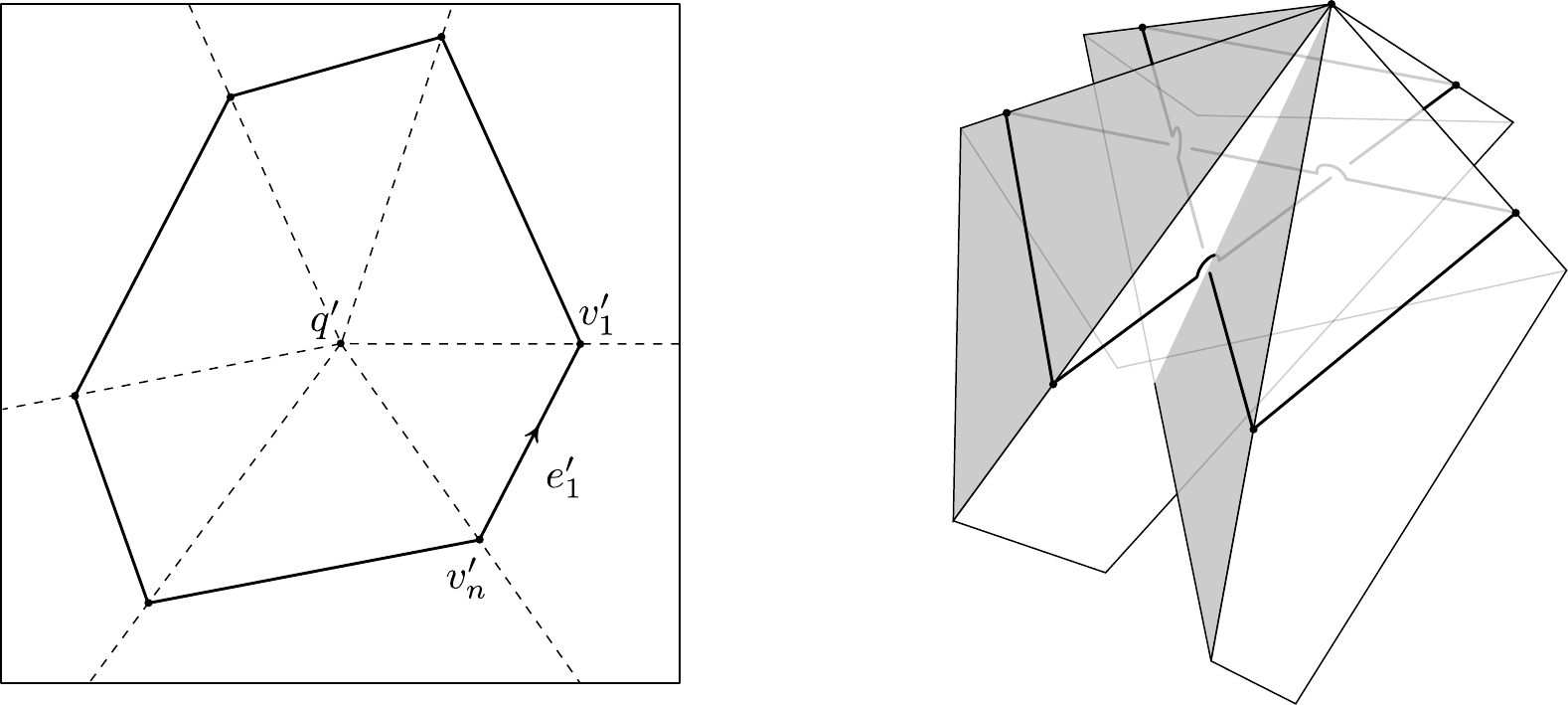}
\caption{(Left) The crease pattern for $F$ with $e_1$ for reference; (Right) The final isometry, including the small alterations to $\ell$ in order to recover the appropriate crossings.}
\label{final_isometry}
\end{figure}
\end{proof}

This upper bound is far from tight some some classes of knots, however:

\begin{theorem}
\label{thm:torus}
The fold number of a $(2,2n+3)$-torus knot ($n\geqslant0$) is two.
\end{theorem}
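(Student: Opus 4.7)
The plan is to establish both $\mathrm{fold}(K) \geq 2$ and $\mathrm{fold}(K) \leq 2$ for each $(2,2n+3)$-torus knot $K$.

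For the lower bound I would show more generally that a folding with fewer than two creases admits only the unknot. With zero creases, $F$ is a rigid embedding of $[0,1]^2$ into $\mathbb{R}^3$, so $\im(F)$ lies in a single plane and any $F(\ell)$ is a planar simple closed curve. With one crease, $\im(F)$ is the union of two planar polygons meeting along the image of the crease; either the two polygons are coplanar (so the image is planar), or they lie in distinct planes meeting along the spine, forming an embedded ``book with two pages'' which is topologically a disk. In either case $F(\ell)$ bounds an embedded subdisk of $\im(F)$ in $\mathbb{R}^3$ and is therefore the unknot. Since the $(2,2n+3)$-torus knots are all nontrivial, we conclude $\mathrm{fold}(K) \geq 2$.

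For the upper bound I would construct, for each $n \geq 0$, an explicit 2-crease folding $F$ and embedded loop $\ell$ whose image $F(\ell)$ is equivalent to the $(2,2n+3)$-torus knot. Since this knot is the closure of the 2-braid $\sigma_1^{2n+3}$, it suffices to produce two strands that braid $2n+3$ times around a common axis and then close off. I would take a long narrow rectangular paper and place two creases whose positions and angles depend on $n$, arranged so that the folded (self-intersecting) paper realizes a twisted ribbon shape making $2n+3$ half-twists about a central axis. Letting $\ell$ be the boundary of a thin rectangle interior to the paper whose long sides run the length of the ribbon, the two long sides of $\ell$ map to the two braided strands while the short sides close off the braid. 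The $\varepsilon$-perturbations of $\ell$ used in the proof of Theorem~\ref{thm:allknots} then install the correct over/under crossing data.

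The main obstacle is the upper-bound construction: explicitly choosing crease angles as functions of $n$ that produce exactly $2n+3$ half-twists. This reduces to a geometric calculation relating each crease's orientation to the total rotation of the ribbon's normal frame across the fold. One also needs to verify that $\ell$ remains embedded in the paper and that $F$ is injective on $\ell$, so that $F(\ell)$ is a genuine simple closed curve in $\mathbb{R}^3$.
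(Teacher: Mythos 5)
Your lower bound is sound and in fact more self-contained than the paper's: the paper obtains it from Corollary~\ref{foldnumatleasttwo}, which in turn rests on the properness machinery of Theorem~\ref{thm:propfoldnoknots}, whereas your direct observation---that the image of a zero- or one-crease folding is either planar or an embedded two-page ``open book,'' so any simple closed curve in it bounds a disk---reaches the same conclusion elementarily. (In either approach one should note that a single crease not reaching the boundary of the square cannot support an isometric fold, so a one-edge crease pattern genuinely separates the paper into two faces.)

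The upper bound, however, contains a genuine gap, and the difficulty you flag at the end is not a technical loose end but an obstruction in principle. Two creases yield exactly three planar faces, and three flat polygons cannot realize a ribbon with $2n+3$ half-twists once $n$ is large. Concretely: each side of your thin rectangular loop $\ell$ is a segment, and a segment meets each of the two creases at most once, so $F(\ell)$ is a closed polygon with a number of edges bounded independently of $n$; a polygon with $N$ edges has crossing number $O(N^2)$ in every diagram, so $F(\ell)$ cannot be a $(2,2n+3)$-torus knot for large $n$ no matter how the crease positions and angles are chosen. The paper's construction (Figure~\ref{torus-knot}) inverts your division of labor: the crease pattern is held fixed at two creases while the loop $\ell$ carries all the $n$-dependence, being drawn with $n+1$ pairs of ``teeth'' so that the single fold interleaves the teeth of one region with those of the other, producing the $2n+3$ crossings. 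To repair your argument you must let the complexity of $\ell$ grow with $n$ and use the two creases only to bring two regions of the paper into linking position; the $\varepsilon$-perturbations from Theorem~\ref{thm:allknots} then fix the over/under data as you describe.
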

\begin{proof}
First we show the fold number is at most two via a construction given in Figure \ref{torus-knot}.
A loop $\ell$ is embedded on a rectangle as shown in (a).
When folded according to the crease pattern, crossings are created for each of $n+1$ pairs of ``teeth'' in the loop.
Re-arranging $F(\ell)$ shows a torus knot with $2n+3$ crossings.
The result then follows from Corollary \ref{foldnumatleasttwo}, wherein we show that the fold number for any nontrivial knot is at least two.
\begin{figure}
\centering
\includegraphics[width=.8\textwidth]{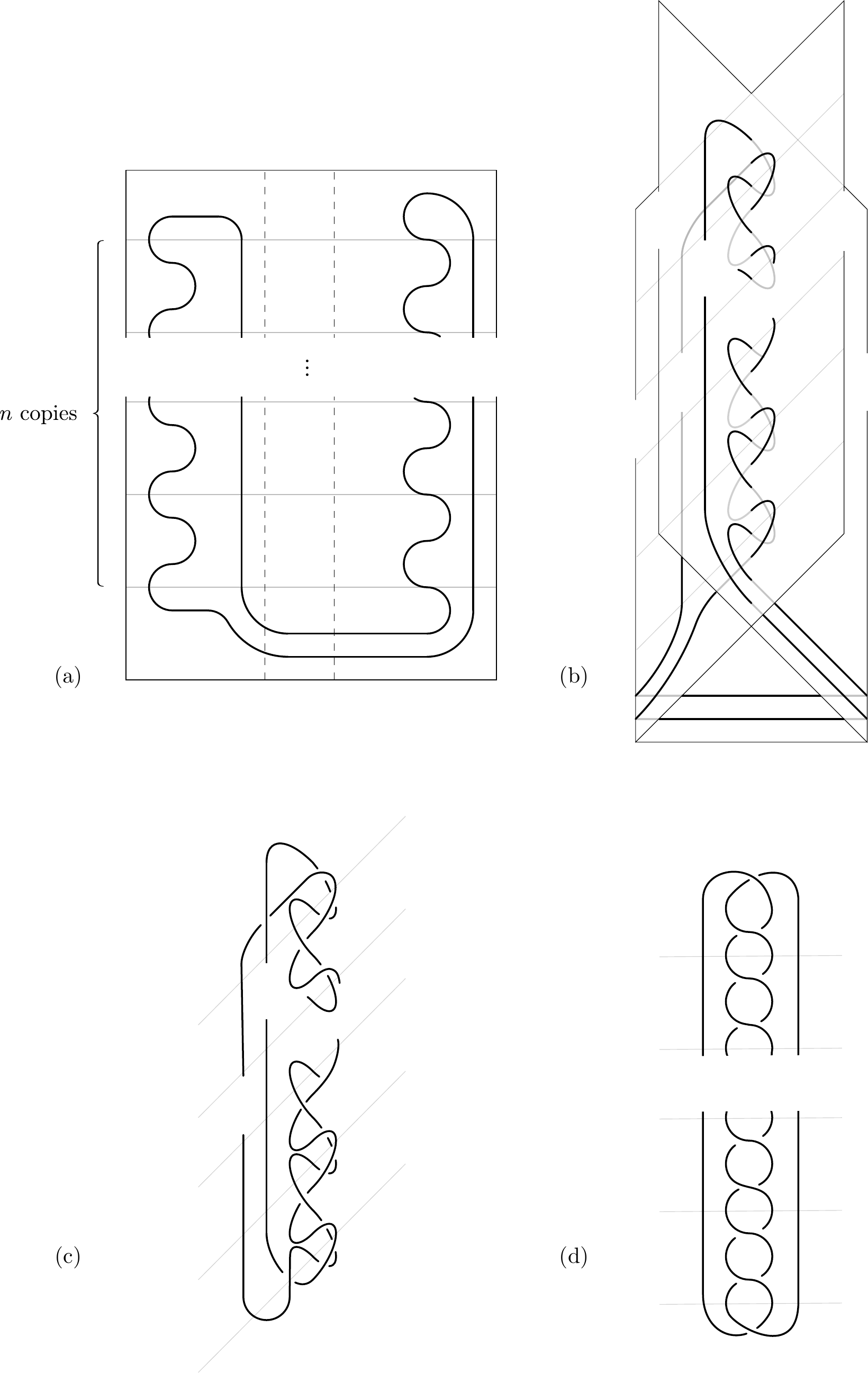}
\caption{(a) Embed a loop on a rectangle of paper. (b) Fold according to the given crease pattern. (c) Loop with paper removed. (d) Use Reidemeister moves to remove superfluous crossings.}
\label{torus-knot}
\end{figure}
\end{proof}

There is much left to be determined about the fold number.
We know it is defined for all knots and that it is upper-bounded by another well-known knot invariant, but it remains to be seen whether the fold number is a \emph{useful} invariant.
For instance, it may be that the fold number for all knots is bounded by a constant and that all knots are admitted by a construction similar to that in Figure \ref{torus-knot}.
This would suggest the complexity of knots can be completely shifted into the geometry of the loop $\ell$ itself.

\section{Proper Foldings}
\label{sec:proper-foldings}

Another project related to the production of knots by foldings is the study of the set of foldings which admit nontrivial knots in the first place.
Let $\mathcal{F}$ denote this set and $\mathcal{F}^c$ denote its complement (those foldings which only admit the unknot).

For instance, we would expect that any folding which is ``physically realizable'', or can be folded out of a real sheet of paper, is in $\mathcal{F}^c$.
We will refer to these physically-realizable maps as \emph{proper} and in this section confirm the above intuition.
(Note that this fact is not immediate because certain foldings we would like to designate as physically-realizable---such as flat foldings---are not injective.)

On the other hand, we will show that there exist non-physically-realizable, or \emph{improper}, foldings which also do not admit nontrivial knots.
Therefore (im)properness does not completely characterize $\mathcal{F}$.

We begin by formalizing the intuition of physical-realizability.
We would like our definition to describe the possible configurations of an idealized zero-thickness sheet of paper and as a result we require a ``non-crossing'' condition similar to that described in \cite{Demaine2007GeometricPolyhedra}.
In particular, we wish to disallow material crossings of the paper while still allowing for geometric contact of layers of the paper, as exemplified in Figure \ref{improper-foldings}.

\begin{definition}
An origami folding $F$ is \emph{proper} if for all $\epsilon>0, $ there exists an injective map $F'$ such that $|F-F'|<\epsilon$ according to the supremum norm.
Otherwise, we say $F$ is \emph{improper}.
\end{definition}

\begin{figure}
\centering
\includegraphics[width=\textwidth]{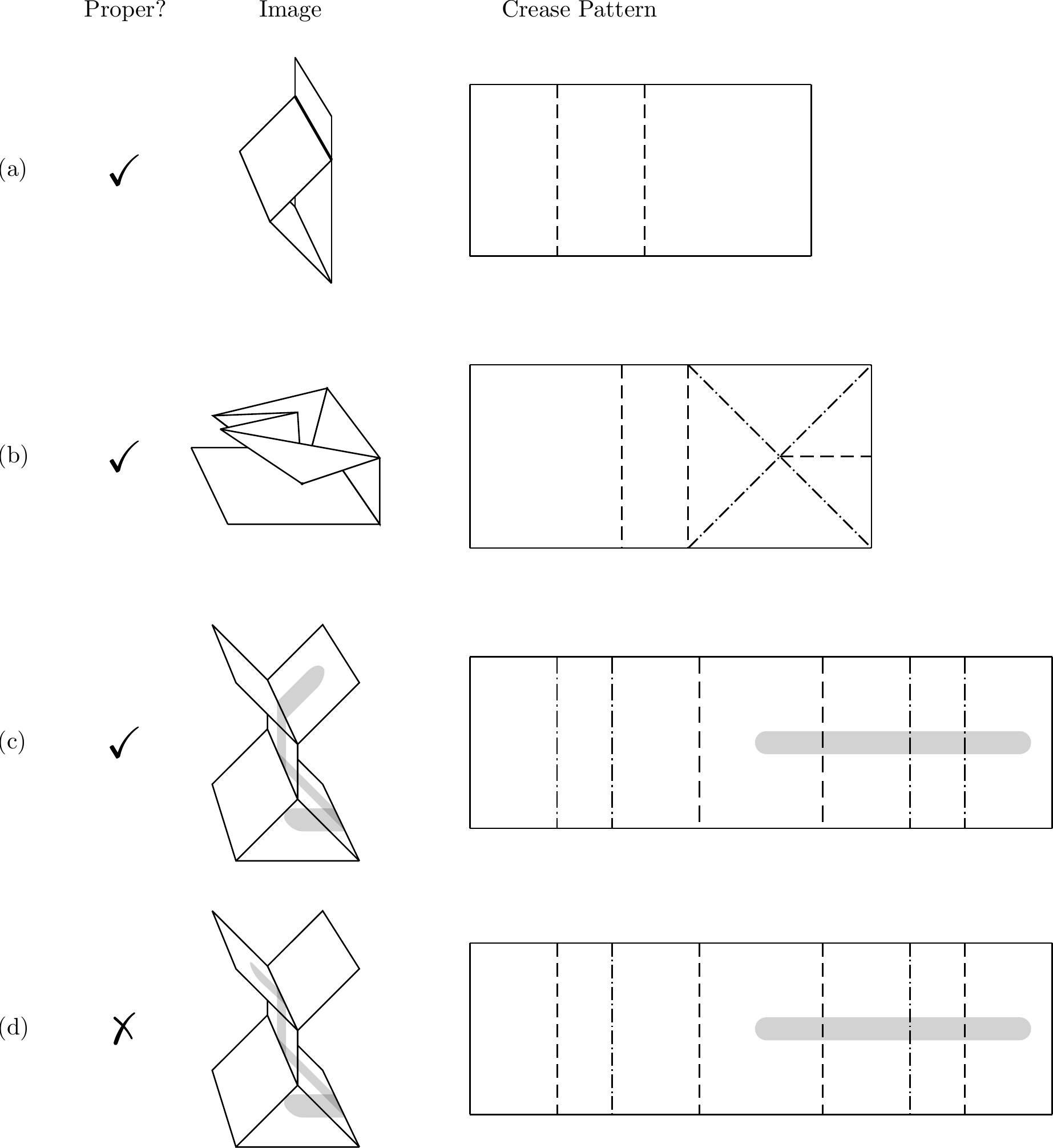}
\caption{(a) A proper folding that self-intersects along a line; (b) A proper folding that self-intersects at a point; (c) A proper folding that self-intersects on a plane; (d) An improper folding that self intersects on a plane. (The gray strips are included to differentiate between (c) and (d).)}
\label{improper-foldings}
\end{figure}

In other words, we define physically-realizable foldings as those which can be well-approximated by injective maps.
Note that these approximating maps do not bear the same isometry condition as the folding itself.
We claim this captures the spirit of origami as practiced in the real world: often folding instructions and crease patterns suggest the paper ought to be mapped to itself (consider, for instance, any flat-folded model), yet when working with real paper the map must be shifted slightly to produce a folded model.
This shifting is captured here by use of arbitrarily-close injective maps, which are allowed to stretch, bend, and contort within the limits set by the supremum norm.

We now prove that proper maps do not admit nontrivial knots in two stages.
First we show that injective piecewise-linear maps never admit nontrivial knots because any loop in $[0,1]^2$ can be continuously slid down into a single planar face of the image of the paper (this sliding can then be extended to the requisite isotopy of $\mathbb{R}^3$).
Second we show $\mathcal{F}^c$ is open and apply the definition of proper.

\begin{theorem}
Proper foldings do not admit nontrivial knots. (However, the converse is false.)
\label{thm:propfoldnoknots}
\end{theorem}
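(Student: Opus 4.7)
The plan is the two-stage strategy previewed just before the statement: handle the injective case directly, then upgrade to proper foldings by a sup-norm approximation.

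\emph{Stage 1: injective PL isometries admit only unknots.} For such an $F$, the image is a PL-embedded topological disk $D\subset\mathbb{R}^3$. A simple PL loop $\ell \subset [0,1]^2$ bounds a PL subdisk $\Delta \subset [0,1]^2$ (PL Jordan), and $F(\Delta)$ is then a PL-embedded disk in $\mathbb{R}^3$ with boundary $F(\ell)$, forcing $F(\ell)$ to be unknotted. (Equivalently, following the authors' own hint, slide $\ell$ inside the 2-disk $[0,1]^2$ into a single face of the crease pattern; its image then lies in a single planar polygon of $D$, and the sliding descends via $F$ to an ambient isotopy in $\mathbb{R}^3$.)

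\emph{Stage 2: openness of $\mathcal{F}^c$ and conclusion.} I would prove a stability lemma: for every injective PL arcwise isometry $F'$, there exists $\delta > 0$ such that whenever $\|F - F'\|_\infty < \delta$, every loop $\ell$ yields an unknotted $F(\ell)$. The input is the standard PL fact that two simple PL closed curves within Hausdorff distance less than a fixed fraction of the minimum gap between non-adjacent edges are ambient isotopic, via a straight-line isotopy through a regular neighborhood. Since Stage 1 gives $F'(\ell)$ unknotted and $F(\ell)$ lies within $\delta$ of $F'(\ell)$, the curve $F(\ell)$ is also unknotted, establishing openness of $\mathcal{F}^c$ near the injective stratum. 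Invoking the definition of proper, every proper $F$ has injective $F'$ arbitrarily close, so $F \in \mathcal{F}^c$.

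\emph{Main obstacle.} The delicate point in Stage 2 is that the definition of \emph{proper} only guarantees continuous injective approximations $F'$, not PL ones, so a direct appeal to PL knot stability needs care. I would either first refine $F'$ to a PL injective map by a generic perturbation of its triangulated image (possible once the approximant is injective and $F$ itself is PL), or prove Stage 1 in slightly greater generality for continuous injective maps, using that their image is a tame topological disk and hence its PL boundary is unknotted. The parenthetical "converse is false" remark is not part of the main argument; I would dispatch it with an explicit improper folding whose self-intersection structure nevertheless forbids any embedded loop from threading nontrivially.
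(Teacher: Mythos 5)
Your Stage 1 is correct and in fact cleaner than the paper's own argument: since $F$ is injective and PL, the image of the subdisk $\Delta$ bounded by $\ell$ is an embedded PL disk with boundary $F(\ell)$, and a PL knot bounding an embedded PL disk is trivial. (The paper instead builds an explicit ambient isotopy by thickening $\im(F)$ to a ball and shrinking the paper into one face; your Seifert-disk observation avoids all of that.)

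Stage 2, however, has a genuine quantifier gap. You claim: for each injective PL $F'$ there is a single $\delta>0$ such that \emph{every} loop $\ell$ gives an unknotted $F(\ell)$ whenever $\|F-F'\|_\infty<\delta$. The PL stability fact you invoke only gives isotopy of two nearby curves when their Hausdorff distance is small compared to the geometry of the particular curve (your own ``minimum gap between non-adjacent edges''), and that gap depends on $\ell$: as $\ell$ ranges over all loops, $F'(\ell)$ can have strands passing arbitrarily close to one another, so no single $\delta$ works uniformly. In short, you are trying to prove that $\mathcal{F}^c$ is \emph{open} near the injective stratum, which is both unsupported by your cited lemma and likely false (a folding $\delta$-close to an embedding can still contain $\delta$-scale crumpling that knots a tiny loop). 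The paper reverses the quantifiers: it shows the set $\mathcal{F}$ of foldings admitting a nontrivial knot is open --- fix a loop $\ell$ with $G(\ell)$ knotted, take $\epsilon$ to be the radius of an embedded tubular neighborhood $T$ of $G(\ell)$, and observe that any $G'$ with $\|G-G'\|<\epsilon$ sends (a perturbation of) $\ell$ to an essential curve in $T$, i.e.\ a satellite of the nontrivial companion $G(\ell)$, hence a nontrivial knot. Then $\mathcal{F}^c$ is closed, and a proper folding, being a limit of injective (PL-approximable) maps each lying in $\mathcal{F}^c$ by Stage 1, lies in $\mathcal{F}^c$. Your instinct about the continuous-versus-PL issue in the approximants is a fair concern (the paper itself treats it briskly), but the argument cannot be completed until the openness claim is turned around as above. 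The counterexample for the converse does require an explicit construction (the paper gives one with two transversally intersecting flaps forced to meet along an identified diagonal); a promissory note does not discharge that part of the statement.
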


\begin{lemma}
Injective piecewise-linear maps do not admit nontrivial knots.
\label{lem:injective-no-knots}
\end{lemma}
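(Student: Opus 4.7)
The plan is to produce a PL-embedded disk in $\mathbb{R}^3$ whose boundary is $F(\ell)$; by the standard PL definition of knot triviality, this immediately yields that $F(\ell)$ is the unknot.

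First I would observe that since $F$ is both piecewise-linear and injective, the map $F : [0,1]^2 \to F([0,1]^2)$ is a PL homeomorphism onto its image. In particular, $F$ carries every PL subdisk of the square to a PL-embedded disk in $\mathbb{R}^3$ with the corresponding boundary curve; this is essentially the only place where injectivity enters.

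Next I would argue entirely inside the unit square. After an arbitrarily small PL perturbation of $\ell$---which induces only a small ambient isotopy of $F(\ell)$ and therefore preserves its knot type---we may assume $\ell$ lies in the open interior of $[0,1]^2$. The planar PL Jordan--Schoenflies theorem then furnishes a closed PL disk $D_\ell \subseteq [0,1]^2$ with $\partial D_\ell = \ell$. Pushing forward by $F$, the image $F(D_\ell)$ is a PL-embedded disk in $\mathbb{R}^3$ whose boundary is $F(\ell)$, which is exactly the PL definition of the unknot.

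The main (and essentially only) delicate step is the appeal to planar Schoenflies, which is standard; handling loops that happen to meet $\partial[0,1]^2$ is dispatched by the perturbation above. Notably, the arcwise-isometric structure of $F$ plays no role---injectivity plus piecewise-linearity drive the entire argument---so this proof works for a considerably broader class of maps than origami foldings. This approach is somewhat cleaner than the ``slide $F(\ell)$ into a single planar face of the image and then extend to an ambient isotopy of $\mathbb{R}^3$'' strategy suggested in the paragraph preceding the lemma, which would additionally require invoking a PL isotopy-extension theorem.
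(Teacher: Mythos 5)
Your proof is correct, and it takes a genuinely different route from the paper's. The paper proves the lemma by building an explicit ambient isotopy of $\mathbb{R}^3$: it thickens the image of the (extended) folding to an injectively embedded slab $F''(B)$, conjugates a radial scaling of the box $B$ through $F''$, and extends by the identity outside $\im(F'')$, thereby sliding $F(\ell)$ into a single planar face. You instead stay in the domain: planar PL Jordan--Schoenflies gives a PL disk $D_\ell \subseteq [0,1]^2$ with $\partial D_\ell = \ell$ (convexity of the square even makes your perturbation step unnecessary, since the bounded complementary region of $\ell$ automatically lies in $[0,1]^2$), and injectivity plus piecewise-linearity make $F|_{D_\ell}$ a PL embedding, so $F(\ell)$ bounds an embedded PL disk. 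Your approach is shorter and avoids all of the thickening and extension bookkeeping; its one nontrivial external input is the characterization of the unknot as the PL knot bounding an embedded PL disk --- in most treatments this is a classical theorem (via regular neighborhoods and Alexander's theorem, or the genus-zero characterization) rather than the definition, so you should cite it as such, but it is entirely standard. The paper's construction buys something you lose: an explicit ambient isotopy carrying $F(\ell)$ onto a planar circle, with the extension to $\mathbb{R}^3$ done by hand (identity off $\im(F'')$), so contrary to your closing remark it does not need a PL isotopy-extension theorem. Both arguments use only injectivity and piecewise-linearity, so both apply beyond arcwise isometries.
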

\begin{proof}
Let $F$ be an injective piecewise-linear map and $\ell$ be a piecewise-linear simple curve in $[0,1]^2$.
Let $I:[0,1]^2\to \mathbb{R}^3$ be defined by $(x,y)\mapsto(x,y,0)$.
We will prove the result by exhibiting an ambient isotopy of $\mathbb{R}^3$ which maps $F(\ell)$ to a linear transformation of $I(\ell)$.

\begin{figure}
\centering
\includegraphics[width=.9\textwidth]{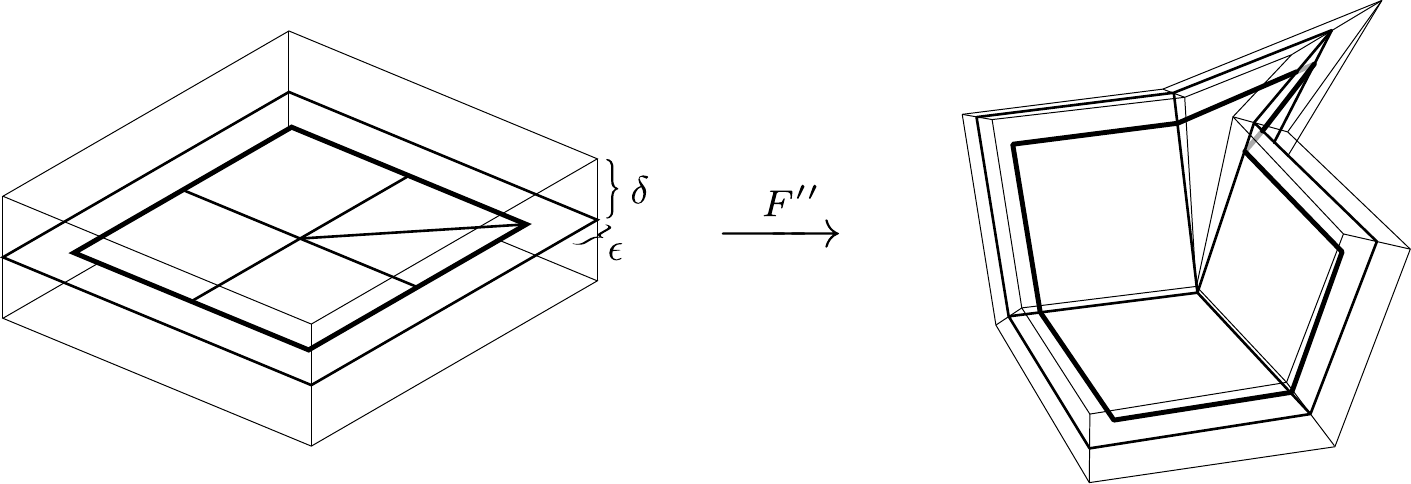}
\caption{The extension of the map $F$ to a map of the ball $F''$.}
\label{folding-to-box-map}
\end{figure}

Choose an $\epsilon$ such that the linear extension $F':[-\epsilon, 1+\epsilon]^2\to\mathbb R^3$ of $F$ remains injective (this is always possible because the paper includes its boundary).
Now define the extension $F''_{\delta}:[-\epsilon,1+\epsilon]^2\times[-\delta,\delta]\to\mathbb R^3$ as pictured in Figure \ref{folding-to-box-map}.
In particular, for each enclosed polygonal face $A$ in the crease pattern, linearly map the $\delta$-tall prisms above and below the paper to truncated prisms in the image.
These truncated prisms are defined as follows:
the prism is $\delta$-tall, and for each edge $e$ bordering $A$, the angle formed between the side of the prism attached to $e$ and $A$ is half that of the angle formed between $A$ and the other polygonal face with which it shares $e$.
It is always possible to choose a $\delta$ such that $F''_\delta$ is injective.
Fix such a $\delta$, let $B = [-\epsilon, 1+\epsilon]^2\times[-\delta,\delta]$, and define the resulting map $B\to \mathbb R^3$ as $F''$.

We now describe an isotopy $G$ on $B$ which we will lift to $\mathbb{R}^3$ via $F''$.
Begin by selecting a point $p$ on the interior of a face $C$ of the crease pattern of $F$.
Then choose an open regular neighborhood $U\subset [0,1]^2\times\{0\}$ of $p$ which is a subset of the interior of $C$.
Let $B' = [0, 1]^2\times[-\delta/2,\delta/2]$ and define $G'_t:B'\times [0,1]\to B'$ to be an isotropic (uniform) scaling of $B'$ about $p$, parametrized by $t$ in such a way that at $G'_0$ is the identity and $G'_1([0,1]^2\times \{0\})\subset U$.
We may now extend $G'_t$ to an isometry $G_t$ of all of $B$ by linearly mapping each prism in $B\backslash B'$ as depicted in Figure \ref{initial-homotopy}.

\begin{figure}
\centering
\includegraphics[width=.9\textwidth]{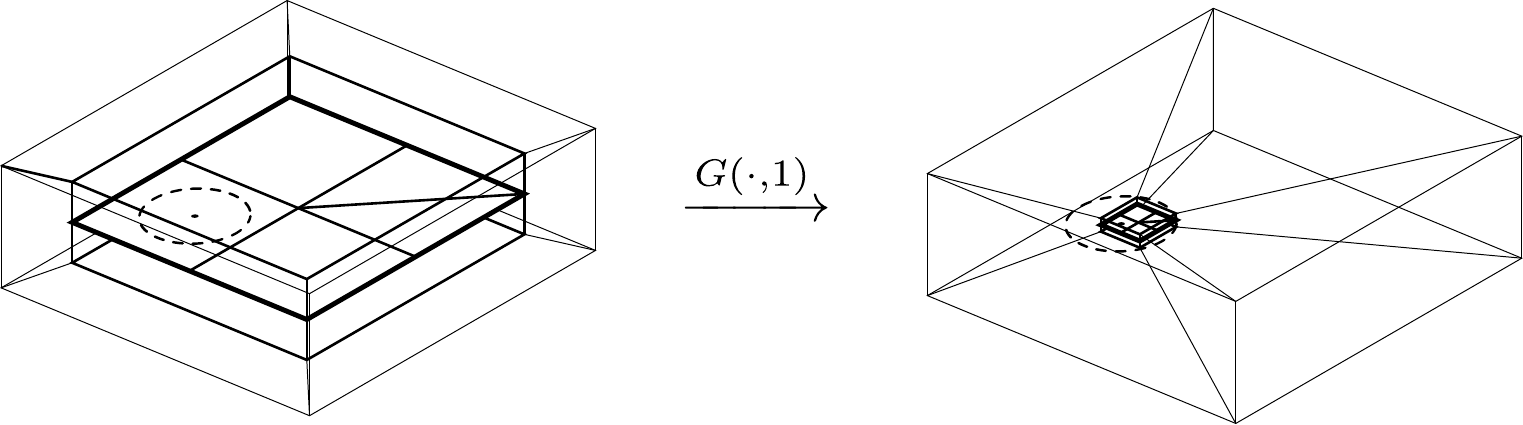}
\caption{The isotopy of $B$.}
\label{initial-homotopy}
\end{figure}

Therefore $H_t(\mathbf{x}):= F''(G_t(F''^{-1}(\mathbf{x})))$ is an isotopy of $\im(F'')$.
Note that $H_t$ restricted to the boundary of $\im(F'')$ is the identity, so we may trivially extend $H_t$ to an isotopy $H'_t$ of $\mathbb R^3$.
Observe that $H'_1(F(\ell))=F(G_1(\ell))$, \emph{i.e.,} is a composition of linear transformations of a simple closed curve, so $H'_1(F(\ell))$ is the unknot.
Therefore $H'_0(F(\ell))=F(\ell)$ is equivalent to the unknot.
\end{proof}

\begin{proof}[Proof of Theorem \ref{thm:propfoldnoknots}]
Let $\mathcal{G}$ be the set of piecewise-linear maps $[0,1]^2\to \mathbb R^3$ which admit nontrivial knots in the origami knot sense; that is, letting $G\in \mathcal{G}$, $G(\ell)$ is a nontrivial knot for some $\ell$).
We claim $\mathcal{G}$ is open in the space of piecewise-linear maps subject to the supremum norm.
Suppose again $G\in \mathcal{G}$.
Then there is a piecewise-linear loop $\ell$ such that $G(\ell)$ is a nontrivial knot.
Consider the tubular region $T$ of $\ell$ of radius $\epsilon$.
Let $G':[0,1]^2\to\mathbb R^3$ be a map such that $|G-G'|< \epsilon$.
If $G'(\ell)$ is injective set $\ell' = \ell$.
Otherwise, define $\ell'$ by perturbing $\ell$ such that $G(\ell')$ is injective and $\im(G(\ell')$) remains in $T$.
Then $G'(\ell')$ is satellite knot with a nontrivial companion knot $G(\ell')$ and is thus nontrivial.

Now suppose $P$ is a proper folding.
By definition there exists a sequence of injective maps $G_1, G_2, \ldots$ which converge to $P$.
But piecewise-linear maps are dense in the space of continuous maps $[0,1]^2\to \mathbb{R}^3$ subject to the supremum norm, so there exists a sequence of piecewise-linear injective maps $G_1', G_2', \ldots$ which also converge to $P$.
Each $G_i$ is in $\mathcal{G}^c$ by Lemma \ref{lem:injective-no-knots} and we have shown above that $\mathcal{F}^c$ is closed.
Therefore $P$ is also in $\mathcal{G}^c$ and the proof of the first statement is complete.

We now construct an improper folding which does not admit a nontrivial knot.
\begin{figure}
\centering
\includegraphics[width=\textwidth]{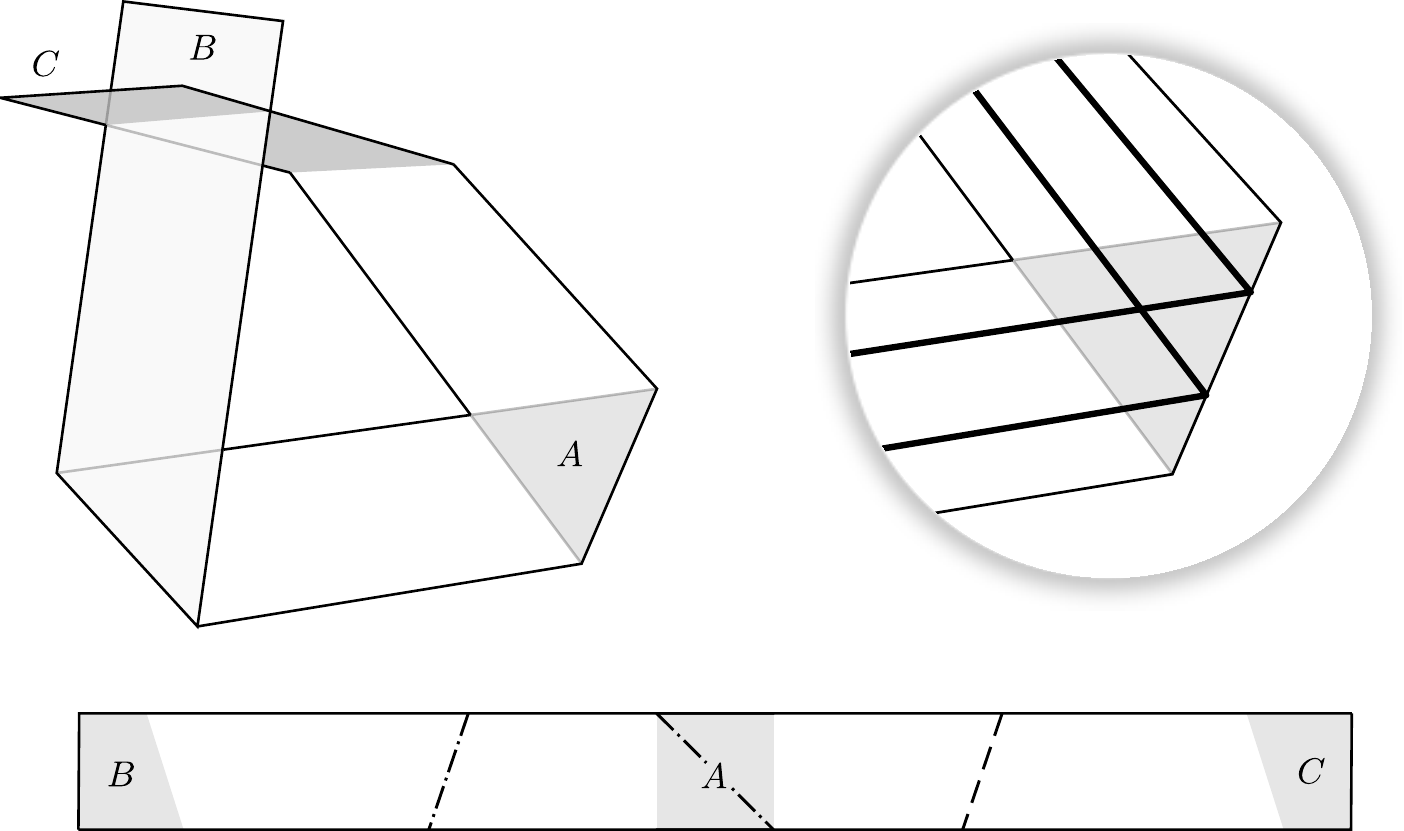}
\caption{An improper folding (left) which admits no nontrivial knots, along with a detail (right) and its crease pattern (bottom). The square region $A$ is mapped to a triangle in the image and the piece of paper intersects transversally at the boundary of regions $B$ and $C$.}
\label{improper-no-knot}
\end{figure}
Consider the construction in Figure \ref{improper-no-knot}.
Here the triangular halves of region $A$ are identified to each other and the ends of the paper self-intersect transversally.
Note that the folding is improper but that, if restricted to the complement of $B$ or the complement of $C$, the folding becomes proper.
Thus, were there to exist a loop whose image is a nontrivial knot, it would have to intersect both region $B$ and region $C$.
However, any loop $\ell$ which intersects both of these regions must also pass through region $A$ in both directions.
Because one half of region $A$ is identified to the other along a diagonal, there is no such loop $\ell$ whose image does not have a self-intersection analogous to that pictured in Figure \ref{improper-no-knot}, right.
Therefore all loops which intersection $B$ and $C$ also have a self-intersection in the region $A$ and therefore do not form knots as their image.
\end{proof}

\begin{corollary}
The fold number of a knot $K$ is zero if $K$ is the unknot; otherwise the fold number of $K$ is at least two.
\label{foldnumatleasttwo}
\end{corollary}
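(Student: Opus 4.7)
The corollary splits into two parts: the unknot has fold number exactly $0$, and every nontrivial knot has fold number at least $2$. The first part is immediate from exhibiting the flat inclusion $F(x,y) = (x,y,0)$, which is an origami folding with an empty crease pattern; any piecewise-linear loop $\ell \subset [0,1]^2$ is then carried by $F$ to a planar simple closed curve, hence to the unknot. Since zero is the smallest possible edge count, the fold number of the unknot equals $0$.

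For the lower bound on nontrivial knots, my plan is to show that every origami folding with zero or one creases is proper, and then invoke Theorem \ref{thm:propfoldnoknots}. In the zero-crease case, piecewise-linearity forces $F$ to be globally affine, and the arcwise isometry condition further forces $F$ to be a rigid motion of the square into $\mathbb{R}^3$. The image $\im(F)$ is a planar parallelogram, and $F$ is itself injective, hence trivially proper.

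The one-crease case is the substantive one. My first step is to argue that the sole crease edge must join two points on the boundary of $[0,1]^2$. The key observation is that if an endpoint of the edge lay in the interior, then $[0,1]^2$ minus the edge would remain connected; by piecewise-linearity $F$ is affine on each connected component of the complement of the crease pattern, so $F$ would be a single affine map on this whole region, forcing differentiability everywhere---in particular along the edge, contradicting its presence in the crease pattern. With the boundary-to-boundary claim established, $[0,1]^2$ partitions into two polygons, each mapped affinely, and their images meet at a dihedral angle along the image of the crease. This is the standard single mountain or valley fold, and it is readily seen to be proper: rotating one flap by an arbitrarily small angle about a nearby line produces injective approximations within supremum distance $\epsilon$ for any $\epsilon > 0$. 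Theorem \ref{thm:propfoldnoknots} then concludes that this $F$ admits no nontrivial knots.

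The main obstacle I foresee is the boundary-to-boundary claim for a single-edge crease pattern, since it relies on combining piecewise-linearity with the definition of the crease pattern as the non-differentiability locus; once that structural fact is in hand, everything else is a direct application of prior results.
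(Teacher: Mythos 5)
Your proof is correct and takes essentially the same approach as the paper: both arguments reduce the lower bound to showing that any folding with at most one crease is proper (injective when the dihedral angle is not $\pi$, and a limit of injective maps otherwise) and then invoke Theorem \ref{thm:propfoldnoknots}. The only substantive difference is that you explicitly justify why a single-edge crease pattern must separate the square into two regions, a structural point the paper's proof simply asserts.
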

\begin{proof}
The first statement is trivial.
To address the latter, observe that the crease pattern of a folding $F$ with one edge defines two regions of the paper separated by a crease.
Therefore $F$ is, without loss of generality, the rotation by an angle $\theta < 2\pi$ of one component of the paper about the axis defined by the crease.
If $\theta \neq \pi$, $F$ is injective and therefore proper.
If $\theta = \pi$ there is a sequence of injective maps (\emph{e.g.,} with $\theta_n = \pi-1/n$; $n\to \infty$) converging to $F$ and thus $F$ is proper.
By Theorem \ref{thm:propfoldnoknots} $F$ therefore does not admit nontrivial knots.
\end{proof}

Because the converse is false, properness does not completely characterize foldings which do not admit nontrivial knots.
This is appears to be due to the fact that properness is determined by a local condition (a small portion of a folding can render a map improper), while the embedability of a nontrivial knot is, as we see in the counterexample, a global property.

\section{Further Research}
\label{sec:conclusions}
We conclude by summarizing the questions raised in the body of the paper, as well as proposing a few more.

\subsection{Further questions about the fold number}
\begin{openq}
Is there a ``nice' property of foldings which completely characterizes which foldings admit knots?
\end{openq}

\begin{openq}
What can be said about origami links? (That is, begin with multiple non-intersecting simple loops in the square and examine foldings which intertwine them.)
\end{openq}

\begin{openq}
Bounds: is the fold number bounded below by other knot invariants?
Is the fold number bounded above by a constant?
What fold numbers can be determined for other knot classes?
\end{openq}

\begin{openq}
How does the fold number factor over the knot sum?
\end{openq}

\begin{openq}
How is the fold number invariant affected by a change in the topology of the original origami paper? (\emph{E.g.,} foldings from a M{\"o}bius strip, cylinder, torus, etc.)
\end{openq}

\subsection{Alterations to the formalism}

The formalization of folding and properness used above represents an attempt to balance topological brevity with faithfulness to origami.
However, there are other interesting definitions and corresponding invariants nearby---both those which prefer topological simplicity and those which more-closely match definitions of origami folding extant in the literature.

In particular, moving towards pure topology, questions remain if we remove the isometry condition of foldings:
\begin{openq}
Call a map $f:[0,1]^2\to \mathbb R^3$ a \emph{weak folding} if it is piecewise linear and define \emph{weak origami knot} and \emph{weak fold number} analogously.
Certainly the weak fold number is defined for all knots and is bounded above by the (strong) fold number.
But what about other bounds?
Are the weak and strong fold numbers linearly (or polynomially) related, or is there a larger gap?
\end{openq}

In the mathematics of origami direction, one component of origami folding which is absent from our study thus far, but which is present in the formalism initiated in \cite{Justin1997TowardOrigami} and refined in Chapter 11 of \cite{Demaine2007GeometricPolyhedra}, is a notion of layer ordering.
That is, foldings which map faces of a crease pattern to other faces ought to specify exactly how these faces are to stack together, were an origamist to go and fold it.

The existance of physically-realizable layer orderings for a folding is already captured by the notion of properness presented above, and indeed all such orderings are the same in the eyes of knot theory (they admit only the unknot, by Theorem \ref{thm:propfoldnoknots})
However, our formalism does not capture layer orderings which \emph{do not} satisfy the non-crossing condition.
With enough care knot embeddings may be defined for these self-crossing layer orderings, representing another interesting environment for the development of knot invariants.

We will be brief and informal here, leaving a fully rigorous exposition for later work.

\begin{definition}
Suppose there exists a folding and a layer-ordering pair $(F,\lambda)$ (which we will call a \emph{layered folding}), as well as a loop $\ell$ on the paper such that $F(\im{\ell})$ is non-injective only on isolated points in the interior of crease pattern faces which are mapped into other faces. (That is, nonzero-length segments of $S^1$ may not be mapped to other nonzero-length segments.)
Then define the map $L$ which is equal to $F(\ell)$ everywhere except for epsilon-small balls about these points of non-injection.
In these balls, adjust the image of $S^1$ according to $\lambda$, so that the curve pieces are separated as prescribed by the layer ordering.
Then $L$ is injective.
We say $K$ is a \emph{layered origami knot} if there exists such a $(F,\lambda)$ and $L$ such that $L=K$.
\end{definition}

This definition gives rise to two more directions for further study: general two-dimensional layered origami folding and flat-folded origami.

\begin{definition}
The \emph{layered fold number} of a knot $K$ is the minimum number of creases required by a layered folding which admits a knot equivalent to $K$
\end{definition}

Many of the above results carry through directly to this setting:
the folding constructed in Theorem \ref{thm:allknots} can be extended trivially to a layered folding, and the torus knots of Theorem \ref{thm:torus} may still be constructed in the same way.
Thus the layered fold number is bounded above by the fold number.
However, the construction disproving the converse to Theorem \ref{thm:propfoldnoknots} is no longer valid. In fact, we conjecture there is no counterexample:

\begin{conjecture}
A layered origami folding admits a nontrivial layered origami knot if and only if it is improper.
\end{conjecture}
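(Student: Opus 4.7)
The plan is to handle the two directions separately, with ``improper $\Rightarrow$ nontrivial layered knot'' carrying the conceptual weight.

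For the forward direction, I would adapt the proof of Theorem \ref{thm:propfoldnoknots}. Given a proper layered folding $(F,\lambda)$ and any loop $\ell$ producing a layered knot $L$, take a sequence of piecewise-linear injective maps $F_n'$ converging to $F$ in the sup-norm (density of p.l.\ maps among continuous maps is used exactly as in Theorem \ref{thm:propfoldnoknots}). Each $F_n'(\ell)$ is unknotted by Lemma \ref{lem:injective-no-knots}. The modifications turning $F(\ell)$ into $L$ happen inside $\epsilon$-small balls about the finite set of self-intersection points; for $n$ large enough and $F_n'$ chosen compatibly with $\lambda$, $L$ is ambient-isotopic to $F_n'(\ell)$ via an isotopy supported in the union of those balls. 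Hence $L$ is unknotted.

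For the backward direction, I would first localize the improperness. The plan is to show that every improper folding $F$ contains an \emph{essential crossing}: two polygonal face images meeting transversally in $\mathbb{R}^3$ in such a way that no $\epsilon$-perturbation (for a fixed $\epsilon>0$) can remove the intersection. This should follow by negating the definition of proper and using compactness of $[0,1]^2$ to extract a persistent obstruction localized near a pair of face images. Next, exploit this essential crossing in the spirit of Theorem \ref{thm:torus}: route a piecewise-linear loop $\ell$ through the preimage of the intersection multiple times, producing many geometric crossings in $F(\ell)$, each with over/under information fixed by $\lambda$. With enough traversals, arrange the crossings so that the layered resolution realizes the crossing word of some $(2,2n{+}3)$-torus knot.

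The main obstacle is ruling out the possibility that $\lambda$ conspires to unknot every such construction. My plan here is a dichotomy: if $\lambda$ were to assign a globally consistent separation of the two sheets throughout their essential intersection, then $(F,\lambda)$ would admit a layering-respecting injective approximation in a neighborhood of that intersection, contradicting local essentiality. A secondary technical step is verifying that the preimage of the essential crossing in $[0,1]^2$ is combinatorially accessible enough to permit a single p.l.\ loop to traverse it the required number of times; this should reduce to a finite check on the crease pattern near the two relevant faces.
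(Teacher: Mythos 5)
Note first that the paper does not prove this statement: it is explicitly left as a conjecture, and the surrounding formalism of layered foldings is itself only sketched informally, so there is no reference argument to compare yours against --- it must stand on its own, and as written it has genuine gaps in both directions. In the forward direction you require the injective approximations $F_n'$ to be ``chosen compatibly with $\lambda$,'' but the paper's definition of properness only guarantees the existence of \emph{some} injective approximations of $F$, with no relation whatsoever to the layer ordering; if $\lambda$ violates the non-crossing condition (the very case the layered formalism is built to capture), no injective approximation is compatible with it, and the claimed ambient isotopy from $L$ to $F_n'(\ell)$ has no basis. Indeed, before either direction can be argued one must decide --- and the paper does not --- whether ``improper'' refers to the map $F$ alone or to the pair $(F,\lambda)$: under the first reading the forward direction is itself in doubt (a proper $F$ stacking many coincident layers, paired with a cyclically inconsistent $\lambda$, could plausibly knot a loop), and under the second the requisite definition of properness for layered foldings is missing.

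The backward direction carries the deeper problems. First, improperness need not localize to a transversal ``essential crossing'' of two faces: Figure \ref{improper-foldings}(d) is improper with the offending self-intersection lying flat in a plane, and more generally the obstruction to an injective approximation can be a global cyclic inconsistency among many pairwise-separable face contacts, so your compactness argument extracting a single persistent transversal intersection does not go through. Second, even granting such a crossing, every crossing obtained by threading $\ell$ repeatedly through one pair of faces receives the \emph{same} over/under from $\lambda$, and turning a family of coherent crossings into a $(2,2n+3)$-torus knot requires the strands to close up outside the crossing region in a specific braid pattern; the paper's Figure \ref{improper-no-knot} is precisely an improper folding in which every loop through the essential region is forced to self-intersect elsewhere (in region $A$), and while the paper asserts that this particular counterexample dissolves in the layered setting, your argument must show in general that the return routing exists, that the extra self-intersections it creates are isolated points (as the layered-knot definition demands), and that the $\lambda$-resolution of those extra crossings does not unknot the result. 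Your dichotomy for ruling out a conspiratorial $\lambda$ conflates these levels: a $\lambda$-consistent separation in a neighborhood of one intersection does not contradict essentiality of that crossing for the map $F$. As it stands the proposal is a reasonable research program but not a proof, which is consistent with the statement's status as an open conjecture.
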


\noindent This would settle a portion of the open question in \cite{Justin1997TowardOrigami} concerning the unknottedness of the boundary of a proper folding.

Finally, we may define another invariant by restricting to flat foldings:
\begin{definition}
The \emph{flat-folding number} of a knot $K$ is the minimum number of creases required by a layered flat folding (mapping into $\mathbb{R}^2$) which admits a knot equivalent to $K$.
\end{definition}

Note that all knots may be flat folded by beginning with the construction in Theorem \ref{thm:allknots} and adding reverse-folds between the existing creases as necessary to flatten the self-intersecting cone.
Therefore many of the questions posed above have analogous questions in this largely combinatorial setting.
Beyond questions posed above, it may also be interesting to explore a connection between Reidemeister moves and a flat folding sequence.

\section*{Acknowledgements}
We thank Carleton College for its financial and academic support of an early version of this project.
We are indebted to Deanna Haunsperger and Helen Wong for their supervision and guidance, as well as to Thomas Hull and Derek Sorenson for helpful discussions.

% reference section
\bibliographystyle{osmebibstyle}
\bibliography{osmerefs}
{\parindent0pt
\small
    \vskip 1em \hrule \vskip 1em 

Joseph Slote\\[-.8em]

\parshape 1 1em \dimexpr\linewidth-1em\relax
California Institute of Technology, 1200 E. California Blvd., Pasadena, CA 91125,
\email{jslote@caltech.edu}\\[-.7em]

\parshape 1 1em \dimexpr\linewidth-1em\relax
{\footnotesize\textit{At time of original publication:}}

\parshape 1 1em \dimexpr\linewidth-1em\relax
Carleton College, 1 North College Street,
Northfield, MN 55057\\[1em]

Thomas Bertschinger\\[-.8em]

\parshape 1 1em \dimexpr\linewidth-1em\relax
Los Alamos National Laboratory, Los Alamos, NM 87545,
\email{tahbertschinger@gmail.com}\\[-.7em]

\parshape 1 1em \dimexpr\linewidth-1em\relax
{\footnotesize\textit{At time of original publication:}}

\parshape 1 1em \dimexpr\linewidth-1em\relax
Carleton College, 1 North College Street,
Northfield, MN 55057

}

% \affiliations{Joseph Slote 
% \at California Institute of Technology, 1200 E. California Blvd., Pasadena, CA 91125,
% \email{jslote@caltech.edu}
% \vspace{-1.3em}
% {\and\footnotesize  \at (Previously Carleton College, 1 North College Street,
% Northfield, MN 55057, \email{joseph.slote@gmail.com})}\\
% \and Thomas Bertschinger \at Carleton College,
% 1 North College Street,
% Northfield, MN 55057, \email{tahbertschinger@gmail.com}}

% % author affiliations are appended at end of paper
% \theaffiliations

\end{document}